\documentclass[12pt]{amsart}

\usepackage{amsmath}
\usepackage{amssymb}
\usepackage{ascmac}
\usepackage{amsthm}

\makeatletter

\@addtoreset{equation}{section}
\makeatother 

\theoremstyle{plain}
\newtheorem{thm}{Theorem}[section]
\newtheorem*{thm*}{Theorem}
\newtheorem{prop}[thm]{Proposition}
\newtheorem{lem}[thm]{Lemma}

\theoremstyle{remark}

\newcommand{\vol}{\operatorname{vol}}

\newcommand{\diam}{\operatorname{Diam}}

\usepackage{latexsym}

\title[Distribution of Neumann eigenvalues]{A note on the distribution of Neumann eigenvalues of the Laplacian on a Euclidean convex domain}

\author{Kei Funano}
\address{Division of Mathematics \& Research Center for Pure and Applied Mathematics, Graduate School of Information Sciences, Tohoku University, 6-3-09 Aramaki-Aza-Aoba, Aoba-ku, Sendai 980-8579, Japan}
\email{kfunano@tohoku.ac.jp}

\subjclass[2010]{35P15, 53C23, 58J50}
\keywords{Eigenvalues of the Laplacian; Neumann boundary condition; Universal inequality; Domain monotonicity; Convex domain}
\date{\today}

\begin{document}
\maketitle

\begin{abstract}
We establish two universal inequalities for Neumann eigenvalues of the Laplacian on a Euclidean convex domain.
\end{abstract}

%%%%%%%%%%%%%%%%%%%%%%%%%%%%
%%%%%%%%%%%%%%%%%%%%%%%%%%%%
%%%%%%%%%%%%%%%%%%%%%%%%%%%%

%%%%%%%%%%%%%%%%%%%%%%%%%%%
%%%%%%%%%%%%%%%%%%%%%%%%%%%
%%%%%%%%%%%%%%%%%%%%%%%%%%%

%%%%%%%%%%%%%%%%%%%%%%%%

%%%%%%%%%%%%%%%%%%%%%%%%%%%%
%%%%%%%%%%%%%%%%%%%%%%%%%%%%
%%%%%%%%%%%%%%%%%%%%%%%%%%%%

%%%%%%%%%%%%%%%%%%%%%%%%

%%%%%%%%%%%%%%%%%%%%%%%%%%%%
%%%%%%%%%%%%%%%%%%%%%%%%%%%%
%%%%%%%%%%%%%%%%%%%%%%%%%%%%
\section{Introduction}

Our primary concern in this paper is universal inequalities for eigenvalues of the Laplacian on a bounded domain in Euclidean space. For Dirichlet eigenvalues, the study of universal inequlities was pioneered by Payne, P\'{o}lya, and Weinberger \cite{PPW}. Since their seminal work, numerous universal inequalities have been established for the Dirichlet case (Refer to \cite{A}). A distinguishing feature of these Dirichlet universal inequalities is that they hold for arbitrary domains without requiring any specific geometric conditions.

In contrast, the situation for Neumann eigenvalues is vastly different. A celebrated result by Colin de Verdi\`{e}re \cite{CdV} demonstrates that for any given finite increasing sequence of non-negative numbers, there exists a domain whose initial Neumann eigenvalues exactly coincide with this sequence. Consequently, without imposing geometric restrictions on the domain, one cannot expect any general universal inequalities for Neumann eigenvalues.

For a bounded domain $\Omega$ in $\mathbb{R}^n$ with a sufficently regular boundary (e.g., Lipschitz boundary) we denote its Neumann eigenvalues of the Laplacian in non-decreasing order (repeated by multiplicity) as 
\begin{align*}
     0=\mu_0(\Omega)<\mu_1(\Omega)\le\mu_2(\Omega)\le \dots  \le \mu_k(\Omega)\le \mu_{k+1}(\Omega)\le \dots \to\infty.
\end{align*}

Our main theorem in this paper is the following.
\begin{thm}\label{MTHM}There exists a universal numeric constant $c>0$ which satisfies the following. Let $\Omega$ be a bounded convex domain in $\mathbb{R}^n$. Then for any $k\geq l$ we have
    \begin{align*}
       \frac{1}{cn^{10}} \Big(\frac{k}{l}\Big)^{\frac{2}{n}}\mu_l(\Omega) \leq \mu_k(\Omega)\leq c n^{10} \Big(\frac{k}{l}\Big)^{2}\mu_l(\Omega).
    \end{align*}
\end{thm}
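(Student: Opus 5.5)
The plan is to reduce both inequalities to a geometric quantity that measures how finely $\Omega$ can be cut into convex pieces. For $m\ge 1$ let
\begin{align*}
D_m(\Omega):=\inf\Big\{\max_{1\le i\le m}\diam(\Omega_i)\Big\},
\end{align*}
where the infimum runs over partitions of $\Omega$ (up to null sets) into $m$ convex pieces $\Omega_1,\dots,\Omega_m$. The heart of the argument is a two-sided comparison
\begin{align*}
\frac{1}{c\,n^{a}\,D_{m}(\Omega)^2}\le \mu_m(\Omega)\le \frac{c\,n^{b}}{D_{m}(\Omega)^2},
\end{align*}
possibly with the index shifted by one on one side. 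Here $a,b$ are fixed exponents, which I expect to be bounded by about $4$. Once this holds, Theorem~\ref{MTHM} reduces to comparing $D_k$ with $D_l$ for $k\ge l$, and the exponents add up to at most $10$.

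\textbf{Lower bound on $\mu_m$.} Take a nearly optimal partition into $m+1$ convex pieces. By Neumann bracketing, $\mu_m(\Omega)$ is at least the $m$-th Neumann eigenvalue of the disjoint union of the pieces. Since there are $m+1$ components, that eigenvalue is at least $\min_i \mu_1(\Omega_i)$. By the Payne--Weinberger inequality this minimum is at least $\pi^2/\max_i\diam(\Omega_i)^2$. This gives $\mu_m\gtrsim D_{m+1}^{-2}$, and the index shift is harmless for what follows.

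\textbf{Upper bound on $\mu_m$.} This is the step I expect to be the main obstacle. If $D_m$ is large, then every $m$-partition has a piece of large diameter, and we must produce $m+1$ disjointly supported test functions with Rayleigh quotient $\lesssim n^{b}/D_m^2$. I would try the following.
\begin{itemize}
\item Run a greedy bisection by hyperplanes, as in the Payne--Weinberger and Gromov--Milman localization arguments. This yields $m+1$ convex subsets that are pairwise separated, each of width comparable to $D_m$ in some direction, with the losses in $n$ coming from the choice of bisection directions.
\item On each subset, place a one-dimensional test function: a Lipschitz cutoff in the long direction.
\item Use concentration of log-concave measures (the Kannan--Lov\'asz--Simonovits and Payne--Weinberger estimates) to bound the Rayleigh quotients. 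This step costs a polynomial factor in $n$.
\end{itemize}
Alternatively, I would invoke previously established estimates of the form $\mu_m\le C n^{\alpha}m^2\mu_1$ for convex domains, applied piecewise on the pieces.

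\textbf{Comparing $D_k$ and $D_l$.}
\begin{itemize}
\item For the left inequality of Theorem~\ref{MTHM}, refine a nearly optimal $l$-partition. Cover each piece by a cubic grid of mesh $D_l/s$ and intersect the piece with the grid cubes; the resulting cells are still convex. A piece of diameter $D_l$ meets at most $(C s)^n$ cubes, and each cell has diameter at most $\sqrt n\,D_l/s$. Choosing $s\simeq (k/l)^{1/n}$ gives $D_k\lesssim \sqrt n\,(l/k)^{1/n}D_l$. Combined with the two-sided comparison, this yields $\mu_k\gtrsim n^{-O(1)}(k/l)^{2/n}\mu_l$.
\item For the right inequality, we need $D_l\lesssim (k/l)\,D_k\cdot n^{O(1)}$. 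Starting from a $k$-partition with pieces of diameter $\le d$, I would slice $\Omega$ by parallel hyperplanes orthogonal to suitable directions into slabs, each meeting about $k/l$ pieces of the original partition. Each slab is convex with diameter controlled by $(k/l)\,d$ up to dimensional factors; the chain structure is needed because $\Omega$ is connected and convex. Making this slicing argument quantitative, with only polynomial loss in $n$, together with the upper bound on $\mu_m$ above, is where I expect the real work to lie.
\end{itemize}
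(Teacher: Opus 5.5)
\textbf{There is a genuine gap.} Your reduction rests on the upper bound $\mu_m(\Omega)\le c\,n^{b}D_m(\Omega)^{-2}$, and you do not prove it. It is needed in both directions: for the left inequality to go from $D_l$ back to $\mu_l$, and for the right one to go from $\mu_k$ to $D_k$.

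\begin{itemize}
\item The hypothesis ``every $m$-partition has a piece of diameter at least $D$'' quantifies over all partitions. Nothing in the bisection/KLS sketch explains how it yields $m+1$ disjointly supported test functions with Rayleigh quotient $\lesssim n^b/D^2$.
\item Applying Neumann estimates piecewise only bounds $\mu_m(\Omega)$ from below, since Neumann eigenfunctions of a piece do not extend by zero to $H^1(\Omega)$.
\item Bounds like $\mu_m\le Cn^{\alpha}m^2\mu_1$ only compare $\mu_m$ with $m^2/(\diam\Omega)^2$, up to powers of $n$. Since $\diam\Omega\le mD_m$, this is in general far weaker than $n^b/D_m^2$: for the unit cube and $m=2^n$ one has $D_m\approx\sqrt n$, whereas $m^2/(\diam\Omega)^2=4^n/n$.
\item The comparison $D_l\lesssim n^{O(1)}(k/l)D_k$ is also left open. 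You give no mechanism that produces at most $l$ convex cells of the required diameter, especially when $\Omega$ is thin in some directions.
\end{itemize}

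A minor point: with $\mu_0=0$, a partition into $m+1$ pieces says nothing about $\mu_m$, because the disjoint union has $m+1$ zero eigenvalues. Buser's theorem with $m$ pieces gives the correct, and better, bound $\mu_m\ge\pi^2/D_m^2$. Your grid refinement bounding $D_k$ by $D_l$ is fine.

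\textbf{The missing idea} is to go the other way: build partitions from eigenvalue information, rather than test functions from partition information.
\begin{itemize}
\item In the paper, Kr\"{o}ger's bound $\mu_k(\Omega)\lesssim\big(k/(\omega_n\vol\Omega)\big)^{2/n}$ caps the volume. So at most $l$ disjoint balls of radius $r=Cnk/(l\sqrt{\mu_k})$ fit in $\Omega$.
\item Hence a maximal $2r$-separated set in the inner parallel body $\{d(x,\partial\Omega)\ge r\}$ has at most $l$ points. Its Voronoi cells are convex with diameter $\le 2(2+\sqrt n)r$. This is Lemma~\ref{Mlem1}; Lemma~\ref{Mlem2} is the same with $r=Cn(l/k)^{1/n}/\sqrt{\mu_l}$.
\item Buser's theorem plus Payne--Weinberger then conclude directly, with no separate $D_k$-versus-$D_l$ comparison.
\item The packing step needs a fat inner parallel body. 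So the paper first reduces to an orthotope $R$, using Hatcher's John-type sandwich $\frac{1}{\sqrt n}R\subseteq\Omega\subseteq nR$ and the domain monotonicity $\mu_k(\Omega')\le cn^2\mu_k(\Omega)$.
\item It then handles a short side $a_1\le 2r$ by induction on dimension, using $\mu_k(R)\le\mu_k(R')$ for the cross-section $R'$.
\end{itemize}
Your plan lacks these ingredients: a volume/packing count of the cells, the orthotope reduction, and a treatment of thin directions. Without them, the two steps you flag as the main obstacle and the real work remain unproved.
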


Let us mention the related known inequalities between the eigenvalues. Below $c_1,c_2,\cdots$ will be concrete universal constants. In \cite[Theorem 1.1 and Remark 1.1]{F3} building on theorems by Milman (\cite{Mi}) and Cheng-Li (\cite{CL}) the author showed the universal inequality $\mu_k(\Omega)\geq  c_1 k^{\frac{2}{n}}\mu_1(\Omega)$ for any bounded convex domain $\Omega$ in $\mathbb{R}^n$ and this is sharp up to a multiplicative universal constant factor. The first inequality in the above theorem generalizes this inequality but the dimensional constant $n^{10}$ appears in the inequality. 
In \cite[Theorem 1.1]{Liu}, Liu showed the sharp dimension-free universal inequality $\mu_k(\Omega)\leq c_2k^2\mu_1(\Omega)$ for any bounded convex domain $\Omega$ in $\mathbb{R}^n$.
In \cite[Theorem 1.2]{F2} the author proved that 
\begin{align}\label{univ}
    \mu_{k+1}(\Omega)\leq c_3 n^4\mu_k(\Omega)
\end{align}under the same situation of that of Liu's. The second inequality in the above theorem unifies the Liu inequality and the inequality (\ref{univ}) but it includes the dimensional constant $n^{10}$ which is even bigger than $n^4$ appeared in (\ref{univ}).

In \cite[Theorems 1.1 and 1.2]{F4} the author proved Theorem \ref{MTHM} for planar convex domains. The proof of Theorem \ref{MTHM} largely follows that given in the paper.
We first reduce the problem to the case of an orthotope utilizing domain monotonicity and standard convex geometry. 
Secondly we find a 'nice' convex partition of the orthotope to give a lower bound for a Neumann eigenvalue of the orthotope in terms of the first eigenvalue of the parts. In \cite{F4} we only constructed such a partition in the planar case and in this paper we inductively construct a partition to generalize for higher dimension, see Lemmas \ref{Mlem1} and \ref{Mlem2}.

\section{Preliminaries}

In this section we collect several results which will be used in the proof of the main theorem.

We use the following key proposition where Hatcher showed that bounded convex domains can be approximated by an orthotope in some sense. It is the consequence of the John theorem which asserts that bounded convex domains can be approximated by ellipsoids (\cite[Theorem III]{J}). 

\begin{prop}[{\cite[Proposition 2.3]{H}}]\label{Hacher lem} Let $\Omega$ be a bounded convex domain in $\mathbb{R}^n$. Choosing the coordinate axes appropriately if necessary we can find a sequence $a_1,a_2,\cdots, a_n$ of positive real numbers such that  
    \begin{align*}
       \Big[-\frac{a_1}{\sqrt{n}},\frac{a_1}{\sqrt{n}}\Big]\times \cdots \times \Big[-\frac{a_n}{\sqrt{n}},\frac{a_n}{\sqrt{n}}\Big]\subseteq  \Omega\subseteq [-a_1 n, a_1 n]\times \cdots [-a_n n, a_n n].
    \end{align*}
\end{prop}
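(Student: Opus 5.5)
The plan is to derive the statement directly from John's theorem \cite[Theorem III]{J}. Let $K=\overline{\Omega}$, which is a convex body. John's theorem gives an ellipsoid $E$ with center $x_0$ such that
\begin{align*}
E\subseteq K\subseteq x_0+n(E-x_0).
\end{align*}
First translate so that $x_0=0$. Then rotate the coordinate axes so that they coincide with the principal axes of $E$. After these two changes of coordinates,
\begin{align*}
E=\Big\{x\in\mathbb{R}^n : \sum_{i=1}^n \frac{x_i^2}{b_i^2}\le 1\Big\}
\end{align*}
for some positive numbers $b_1,\dots,b_n$. We will take $a_i=b_i$. Translations and rotations are isometries, so this is the change of coordinate axes allowed in the statement.

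\emph{Inner inclusion.} Let $x$ lie in the box $\prod_i[-b_i/\sqrt{n},\,b_i/\sqrt{n}]$. Each coordinate satisfies $x_i^2/b_i^2\le 1/n$, and summing over the $n$ coordinates gives
\begin{align*}
\sum_{i=1}^n \frac{x_i^2}{b_i^2}\le 1 .
\end{align*}
Hence the box lies in $E$, and therefore in $K$.

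\emph{Outer inclusion.} Every point of $E$ satisfies $|x_i|\le b_i$ for each $i$, because each single term of the defining sum is at most $1$. So $E\subseteq\prod_i[-b_i,b_i]$, and scaling by $n$ gives
\begin{align*}
K\subseteq nE\subseteq\prod_{i=1}^n[-nb_i,\,nb_i].
\end{align*}
In particular $\Omega\subseteq K$ satisfies the outer inclusion.

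The only delicate point is the difference between $\Omega$ and its closure $K$. The inner box is closed, and its vertices lie on $\partial E$, which may touch $\partial\Omega$. So strictly we only get the inner box inside $\overline{\Omega}$, with its interior inside $\Omega$, since the interior of a convex body is the interior of its closure. For the intended application via domain monotonicity this suffices, because eigenvalue comparisons are insensitive to a boundary of measure zero. If a literal inclusion in $\Omega$ is wanted, replace $a_i$ by $(1-\varepsilon)b_i$. Using the fact that $\Omega$ is open, convex and contains $0$, the shrunk closed box $(1-\varepsilon)\prod_i[-b_i/\sqrt{n},b_i/\sqrt{n}]$ lies in $\Omega$. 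This costs only a factor $(1-\varepsilon)^{-1}$ in the outer box, and that factor is absorbed by the universal constants used later. Apart from this technicality, the argument is just John's theorem plus the two elementary inclusions above.
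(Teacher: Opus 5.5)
Your proof is correct and follows the paper's route: the paper cites Hatcher and notes that the proposition is a consequence of John's theorem, and you supply the elementary steps. These are centering and aligning the axes with the John ellipsoid $E$, observing that $\prod_i[-b_i/\sqrt n,b_i/\sqrt n]\subseteq E$, and using $K\subseteq nE\subseteq\prod_i[-nb_i,nb_i]$. Your handling of the closed-versus-open issue is also sound, and the $(1-\varepsilon)^{-1}$ loss is harmless: the application only needs the open inner box inside $\Omega$, or one can let $\varepsilon\to 0$ in the final eigenvalue inequality, which does not depend on $\varepsilon$.
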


The above proposition together with the next theorem suggest us that the proof of the main theorem reduces to the case where the domain is an orthotope.
The next theorem states that the smaller domain has a larger eigenvalue up to a multiplicative constant factor under convexity assumption.
\begin{thm}[{\cite[Theorem 1.1]{F}}]\label{domain monotonicity}There exists a universal constant $c>0$ which satisfies the following. For any two
bounded convex domains $\Omega\subseteq \Omega'$ in $\mathbb{R}^n$ their Neumann eigenvalues of the Laplacian satisfy
    \begin{align*}
        \mu_k(\Omega')\leq c n^2\mu_k(\Omega).
    \end{align*}The constant $c$ can be choosen as $c=(92)^2$.
\end{thm}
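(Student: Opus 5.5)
The plan is to go through an intermediate, purely geometric quantity that is obviously monotone under inclusion, and to compare $\mu_k$ with it from both sides. For a bounded convex domain $U\subseteq\mathbb{R}^n$, let
\[
d_k(U):=\inf\Big\{\max_{1\le i\le k}\diam(U_i)\ :\ U=U_1\cup\dots\cup U_k,\ U_i \text{ convex with disjoint interiors}\Big\}.
\]
If $\Omega\subseteq\Omega'$, any convex partition $\{U_i\}$ of $\Omega'$ restricts to the convex partition $\{U_i\cap\Omega\}$ of $\Omega$, with no larger diameters. Hence $d_k(\Omega)\le d_k(\Omega')$. The theorem will then follow from the two estimates
\begin{align*}
\mu_k(U)\ \ge\ \frac{\pi^2}{d_k(U)^2}
\qquad\text{and}\qquad
\mu_k(U)\ \le\ \frac{C n^2}{d_k(U)^2},
\end{align*}
valid for every bounded convex $U$. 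Applying the first to $\Omega$ and the second to $\Omega'$ gives $\mu_k(\Omega')\le C n^2/d_k(\Omega')^2\le Cn^2/d_k(\Omega)^2\le (C/\pi^2)\,n^2\mu_k(\Omega)$.

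The lower bound is routine. Given a convex partition $U_1,\dots,U_k$ of $U$, the span of the first $k+1$ Neumann eigenfunctions has dimension $k+1$. Imposing the $k$ linear conditions $\int_{U_i}f=0$ therefore leaves a nonzero $f$ in that span. For this $f$, Payne--Weinberger on each convex piece gives $\int_{U_i}|\nabla f|^2\ge \pi^2\diam(U_i)^{-2}\int_{U_i}f^2$. Summing over $i$, we get $\mu_k(U)\ge \pi^2/\max_i\diam(U_i)^2$, and taking the infimum over partitions yields the first estimate.

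The upper bound is the main obstacle. Put $d=d_k(U)$. I would try to produce $k+1$ points $x_0,\dots,x_k\in U$ that are pairwise at distance $\gtrsim d$, and then use tent functions $\phi_j(x)=\max\{0,r-|x-x_j|\}$ with $r\simeq d$. These have disjoint supports, so min-max gives $\mu_k(U)\le\max_j \|\nabla\phi_j\|^2/\|\phi_j\|^2$. The ratio for a single tent is controlled by how the volume of $B(x_j,r)\cap U$ compares with that of $B(x_j,r/2)\cap U$. By convexity and a Brunn--Minkowski/Bishop--Gromov type argument on the cone from $x_j$, this ratio is at most $2^n$ naively. Recovering only a factor $n^2$ needs more care: I would take tents of width $r/n$ instead of $r/2$, where the volume ratio $(1-1/n)^{-n}$ is bounded, and this costs a factor $n^2$ in the gradient.

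The existence of the $k+1$ separated points is where $d_k$ enters. I would pick a maximal $\delta$-separated set in $U$ with $\delta$ a small multiple of $d$, and form the Voronoi cells, which are convex with diameter $\lesssim\delta$ (up to a dimensional factor I would need to check). If this set had at most $k$ points, it would give a convex partition into at most $k$ pieces of small diameter, contradicting the definition of $d$. Verifying that the Voronoi cells of a maximal net have diameter $O(\delta)$ rather than $O(\sqrt n\,\delta)$ is the delicate point, and it is where I expect the constant to be lost.
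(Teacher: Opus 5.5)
The paper does not prove this theorem: it is quoted without proof from the author's earlier work, so there is no in-paper argument to set yours against. Your proposal is nonetheless a correct, self-contained proof, and it is built from the same tools the paper uses in Section 3.

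\begin{itemize}
\item Your lower bound $\mu_k(U)\ge \pi^2/d_k(U)^2$ is Buser's partition bound (Theorem \ref{Buserthm}) combined with Payne--Weinberger (Theorem \ref{Diamthm}), exactly as in the proof of Theorem \ref{MTHM}.
\item Your $k+1$ separated points come from Voronoi cells of a maximal net, as in Lemma \ref{Mlem1}.
\end{itemize}

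One small repair: define $d_k$ with \emph{at most} $k$ convex pieces. Restricting to $\Omega$, or the Voronoi construction, can produce empty, degenerate or fewer pieces. Both the Buser argument and min--max tolerate this.

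The step you flag as delicate is immediate and loses nothing. Your net points lie in $U$ itself. By maximality of a $\delta$-separated set $\{x_i\}\subseteq U$, every $y\in U$ satisfies $|y-x_j|<\delta$ for some $j$. If $y$ lies in the Voronoi cell of $x_i$, then $|y-x_i|\le|y-x_j|<\delta$. So each cell lies in $B(x_i,\delta)$ and has diameter at most $2\delta$. Consequently, for $\delta<d_k(U)/2$, a maximal $\delta$-net of $U$ has at least $k+1$ points.

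The $\sqrt n$ loss in Lemma \ref{Mlem1} arises only because the points there are placed in the inner parallel body $\widetilde\Omega$. That placement is needed so that whole balls fit in $\Omega$ for the volume-packing count. In your argument the number of points is controlled by contradiction with the definition of $d_k$, so no such restriction is needed.

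For the test functions:
\begin{itemize}
\item Because $U$ is convex and contains $x_j$, the homothety $y\mapsto x_j+\lambda(y-x_j)$, $\lambda\in(0,1)$, maps $U\cap B(x_j,r)$ into $U\cap B(x_j,\lambda r)$.
\item Taking $\lambda=1-\frac1n$ gives $\vol(U\cap B(x_j,(1-\frac1n)r))\ge\frac14\vol(U\cap B(x_j,r))$ for $n\ge 2$.
\item Use a plateau of radius $(1-\frac1n)r$ with a ramp of slope $n/r$, and set $r=\delta/2$. This gives $\mu_k(U)\le 16n^2/\delta^2$, hence $\mu_k(U)\le 64n^2/d_k(U)^2$.
\end{itemize}

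Carried out, your route yields $\mu_k(\Omega')\le\frac{64}{\pi^2}\,n^2\mu_k(\Omega)$ for $n\ge2$; the case $n=1$ is exact monotonicity for intervals. This constant is far better than the quoted $92^2$. Your argument also shows that $d_k$ is a two-sided proxy for $\mu_k$ on convex domains, up to a factor of order $n^2$.
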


Under the assumption of convexity, Neumann eigenvalues of the Laplacian are related with the diameter as follows:
\begin{thm}[{Payne–Weinberger, \cite[(1.9)]{PW}}]\label{Diamthm}Suppose that $\Omega$ is a bounded convex domain in $\mathbb{R}^n$. Then we have $\mu_1(\Omega)\geq \frac{\pi^2}{(\diam \Omega)^2}$.
\end{thm}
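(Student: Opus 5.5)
The plan is to follow the original Payne–Weinberger strategy: reduce the $n$-dimensional Rayleigh quotient estimate to a family of one-dimensional weighted problems on segments of length at most $\diam\Omega$, where the weights inherit a concavity property from the convexity of $\Omega$. Since $\mu_1(\Omega)=\inf\{\int_\Omega|\nabla f|^2/\int_\Omega f^2 : \int_\Omega f=0\}$, it suffices to fix a smooth $f$ with $\int_\Omega f=0$ and show $\int_\Omega|\nabla f|^2\ge \frac{\pi^2}{D^2}\int_\Omega f^2$, where $D=\diam\Omega$.

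\textbf{Step 1 (partition).} Given $\varepsilon>0$, I will cut $\Omega$ by hyperplanes into finitely many convex pieces $\Omega_1,\dots,\Omega_N$ with $\int_{\Omega_i}f=0$ for each $i$. Each piece will be thin in $n-1$ directions, so that it lies in an $\varepsilon$-neighbourhood of a segment. The cutting is a bisection procedure. Fix a $2$-plane direction and rotate a hyperplane containing a fixed $(n-2)$-dimensional affine subspace. By the intermediate value theorem, some position splits the integral of $f$ into two zeros. Iterating this, and alternating the choice of $(n-2)$-subspaces, shrinks all cross-sectional widths except one. Because $\int_\Omega|\nabla f|^2$ and $\int_\Omega f^2$ are additive over the pieces, it suffices to prove the inequality, up to an error $o(1)$ as $\varepsilon\to0$, on each thin convex piece.

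\textbf{Step 2 (reduction to one dimension).} Let $\Omega_i$ be a thin piece lying near a segment $[0,L]$ with $L\le D$. Let $h(t)$ denote the $(n-1)$-volume of the cross-section at $t$. By Brunn–Minkowski, $h^{1/(n-1)}$ is concave, so in particular $h$ is log-concave. As $\varepsilon\to0$, $f$ is essentially a function of $t$ on $\Omega_i$. The quantities $\int_{\Omega_i}|\nabla f|^2$, $\int_{\Omega_i}f^2$ and $\int_{\Omega_i}f$ then approximate $\int_0^L f'^2h\,dt$, $\int_0^L f^2h\,dt$ and $\int_0^L fh\,dt$, respectively. This reduces everything to the following claim. \emph{For a log-concave weight $h$ on $[0,L]$ and any $g$ with $\int_0^L gh=0$, we have $\int_0^L g'^2h\ge\frac{\pi^2}{L^2}\int_0^L g^2h$.}

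\textbf{Step 3 (the weighted 1D inequality), which I expect to be the main obstacle.} Let $g$ be the first nontrivial Neumann eigenfunction of the Sturm–Liouville problem $(hg')'+\mu hg=0$ with $g'(0)=g'(L)=0$. By Sturm theory it is monotone, say $g'<0$ on $(0,L)$. I will use a Prüfer-type substitution $\sqrt{\mu}\,g=r\cos\theta$ and $g'=-r\sin\theta$, so that $\theta$ increases from $0$ at $t=0$ to $\pi$ at $t=L$. A computation gives
\[
\theta'=\sqrt{\mu}+\frac{h'}{h}\sin\theta\cos\theta .
\]
If $h$ is smooth and strictly log-concave one might hope to absorb the drift term directly, but the clean route is to consider $v=g'$ instead. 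Differentiating the equation shows that $v$ satisfies $(hv')'+\mu hv=-(\log h)''\,h\,v$ with $v(0)=v(L)=0$. Since $-(\log h)''\ge0$ and $v$ has a fixed sign, multiplying by $v$ and integrating yields $\int_0^L hv'^2\ge\mu\int_0^L hv^2$... with the inequality in the wrong direction for what I want, so I will instead run the Prüfer argument on $v$. There the log-concavity term has a favourable sign, giving $\theta'\le\sqrt{\mu}$ with $\theta$ traversing an interval of length $\pi$. This yields $\pi\le\sqrt{\mu}\,L$, that is, $\mu\ge\pi^2/L^2\ge\pi^2/D^2$.

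Approximating non-smooth $h$ by smooth log-concave weights, and controlling the $o(1)$ errors from Step 1 uniformly, are technical but routine. The delicate point is getting the sign bookkeeping in the Prüfer comparison right.
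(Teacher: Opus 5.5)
The paper does not prove this statement; it quotes it from Payne--Weinberger. Your Steps 1--2 are their classical reduction, and the one-dimensional claim for log-concave weights is true. The gap is in Step 3, which is where all the content lies.

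First, a slip: from $g''=-\frac{h'}{h}g'-\mu g$ one gets $\theta'=\sqrt{\mu}-\frac{h'}{h}\sin\theta\cos\theta$, with a minus sign. More importantly, passing to $v=g'$ does not remove the first-order term you were trying to avoid. With $\psi=\log h$ the equation reads $v''+\psi'v'+(\mu+\psi'')v=0$, and for $\sqrt{\mu}\,v=\rho\sin\phi$, $v'=\rho\cos\phi$ one finds
\[
\phi'=\sqrt{\mu}+\psi'\sin\phi\cos\phi+\frac{\psi''}{\sqrt{\mu}}\sin^2\phi .
\]
Log-concavity makes only the last term favourable. The drift $\psi'\sin\phi\cos\phi$ is still present and has no sign. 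Near the left endpoint with $h'(0)>0$ (the typical situation at the tip of a needle), $\phi$ is close to $0$. There the drift is of order $\phi$ while the curvature term is of order $\phi^2$, so $\phi'>\sqrt{\mu}$. The pointwise bound $\theta'\le\sqrt{\mu}$ you assert is therefore false, not merely a matter of sign bookkeeping.

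The missing idea is to remove the drift before comparing, via the Liouville substitution $w=-\sqrt{h}\,g'=-e^{\psi/2}v$. This $w$ is positive on $(0,L)$ and vanishes at $0$ and $L$. A direct computation gives
\[
w''+Q\,w=0,\qquad Q=\mu+\tfrac12\psi''-\tfrac14(\psi')^2\le\mu .
\]
Suppose $\sqrt{\mu}\,L<\pi$, so that $z=\sin(\sqrt{\mu}\,t)$ is positive on $(0,L]$. Integrating $(w'z-wz')'=(\mu-Q)wz\ge0$ over $[0,L]$ gives $w'(L)z(L)\ge0$. Since $w'(L)\le0<z(L)$, this forces $w'(L)=w(L)=0$, hence $w\equiv0$, a contradiction.

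Equivalently, your Pr\"ufer argument works once you use the pair $(v'+\frac12\psi'v,\ \sqrt{\mu}\,v)$ instead of $(v',\sqrt{\mu}\,v)$. Then $\phi'=\sqrt{\mu}\cos^2\phi+\frac{Q}{\sqrt{\mu}}\sin^2\phi\le\sqrt{\mu}$. If $h$ vanishes at an endpoint, your approximation step must also ensure $w\to0$ there.

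A lesser point concerns Step 1. Rotating about an arbitrary $(n-2)$-plane gives zero-mean halves but need not shrink anything. Put that plane through the centroid of the piece's projection onto the chosen $2$-plane. Then, by Gr\"unbaum's inequality, each cut leaves every piece with at most $5/9$ of the parent's projected area. Cycle through all coordinate $2$-planes; small coordinate projections then force the pieces to be needle-like.
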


We also use the following upper bound for Neumann eigenvalues in terms of volume.   
\begin{thm}[{Kr\"{o}ger, \cite[Corollary 2]{Kr2}}]\label{Kr ineq2}Let $\Omega$ be a (not necessarily convex) bounded domain in $\mathbb{R}^n$ with Lipschitz boundary. Then for any natural number $k$ we have
\begin{align*}
    \mu_k(\Omega)\leq (2\pi)^2\Big(\frac{n+2}{2}\Big)^{\frac{2}{n}}\Big(\frac{k}{\omega_n \vol \Omega}\Big)^{\frac{2}{n}}\lesssim \Big(\frac{k}{\omega_n \vol \Omega}\Big)^{\frac{2}{n}},
\end{align*}where $\omega_n$ is the volume of a unit ball in $\mathbb{R}^n$.
\end{thm}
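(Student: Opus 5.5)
This is Kröger's inequality, and I would prove it by the Fourier test-function method: use plane waves as trial functions, remove their components along the first $k$ eigenfunctions, and average the Rayleigh quotient over a ball of frequencies. Let $u_0,u_1,\dots,u_{k-1}$ be real $L^2(\Omega)$-orthonormal Neumann eigenfunctions for $\mu_0(\Omega),\dots,\mu_{k-1}(\Omega)$. For $\xi\in\mathbb{R}^n$ put $f_\xi(x)=e^{ix\cdot\xi}$ and $a_j(\xi)=\int_\Omega f_\xi u_j\,dx$. Define
\begin{align*}
g_\xi=f_\xi-\sum_{j=0}^{k-1}a_j(\xi)u_j .
\end{align*}
Since $g_\xi\in H^1(\Omega)$ is orthogonal to $u_0,\dots,u_{k-1}$, the min-max principle gives $\mu_k(\Omega)\int_\Omega|g_\xi|^2\le\int_\Omega|\nabla g_\xi|^2$. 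Complex-valued trial functions are allowed, by applying the principle to real and imaginary parts separately.

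Next I compute both sides. By orthonormality,
\begin{align*}
\int_\Omega|g_\xi|^2=\vol\Omega-\sum_{j<k}|a_j(\xi)|^2 .
\end{align*}
The weak Neumann eigenvalue equation gives $\int_\Omega\nabla f_\xi\cdot\nabla u_j=\mu_j a_j(\xi)$; this is where the Lipschitz boundary is used, only to have the $H^1$ variational framework. Together with $\int_\Omega\nabla u_i\cdot\nabla u_j=\mu_j\delta_{ij}$, this yields
\begin{align*}
\int_\Omega|\nabla g_\xi|^2=|\xi|^2\vol\Omega-\sum_{j<k}\mu_j|a_j(\xi)|^2\le|\xi|^2\vol\Omega .
\end{align*}

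Then I integrate over the ball $|\xi|\le r$. The key estimate is Plancherel: $a_j(\xi)$ is, up to a sign convention, the Fourier transform of $u_j$ extended by zero. Hence
\begin{align*}
\int_{|\xi|\le r}|a_j(\xi)|^2d\xi\le\int_{\mathbb{R}^n}|a_j|^2=(2\pi)^n .
\end{align*}
Using $\int_{|\xi|\le r}|\xi|^2d\xi=\frac{n}{n+2}\omega_n r^{n+2}$, integration gives
\begin{align*}
\mu_k(\Omega)\bigl(\omega_n r^n\vol\Omega-k(2\pi)^n\bigr)\le\frac{n}{n+2}\,\omega_n r^{n+2}\vol\Omega .
\end{align*}

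Finally I optimize in $r$ by choosing $\omega_n r^n\vol\Omega=\frac{n+2}{2}k(2\pi)^n$. The left bracket becomes $\frac{n}{2}k(2\pi)^n$, and the right side becomes $\frac{n}{2}k(2\pi)^n r^2$. Therefore
\begin{align*}
\mu_k(\Omega)\le r^2=(2\pi)^2\Big(\frac{n+2}{2}\Big)^{2/n}\Big(\frac{k}{\omega_n\vol\Omega}\Big)^{2/n},
\end{align*}
which is the claimed bound. The $\lesssim$ form follows because $\big(\frac{n+2}{2}\big)^{2/n}$ is bounded uniformly in $n$.

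The only delicate points are the Plancherel bound and checking that the index shift is right: exactly $k$ eigenfunctions, $u_0$ through $u_{k-1}$, are projected out in order to bound $\mu_k$. I do not expect a genuine obstacle.
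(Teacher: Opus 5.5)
Your proof is correct. The paper gives no proof of this statement and simply quotes Kröger's Corollary 2. Your Fourier-averaging argument is essentially Kröger's own, and it reproduces his constant exactly: you project the plane waves off $u_0,\dots,u_{k-1}$, use the Plancherel bound $\int|a_j|^2=(2\pi)^n$, average over $|\xi|\le r$, and take $\omega_n r^n\vol\Omega=\frac{n+2}{2}k(2\pi)^n$. You also handle the paper's indexing $\mu_0=0$ correctly.

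One small precision: the Lipschitz assumption is not needed for the weak eigenvalue identity itself. It is needed for compactness of $H^1(\Omega)\hookrightarrow L^2(\Omega)$, which gives the discrete spectrum and hence the min-max characterization you invoke.
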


Let $\Omega$ be a bounded domain in a Euclidean space and $\{\Omega_i\}_{i=1}^{k}$ be a finite partition of $\Omega$ by subdomains;
    $\Omega=\bigcup_i \Omega_i$ and $\vol (\Omega_i\cap \Omega_j)=0$ for different $i\neq j$. Buser gave a lower bound of eigenvalues of the Laplacian in terms of the first eigenvalue of the parts of a finite partition:

\begin{thm}[{Buser, \cite[Theorem 8.2.1]{B}}]\label{Buserthm}Under the above situation we have
    \begin{align*}
        \mu_k(\Omega)\geq \min_i \mu_1(\Omega_i).
    \end{align*}
\end{thm}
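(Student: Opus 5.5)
The plan is a dimension count in the min-max characterization of $\mu_k(\Omega)$. Let $u_0,u_1,\dots,u_k$ be $L^2(\Omega)$-orthonormal Neumann eigenfunctions of $\Omega$ with eigenvalues $\mu_0(\Omega),\dots,\mu_k(\Omega)$, and put $V=\mathrm{span}\{u_0,\dots,u_k\}\subseteq H^1(\Omega)$, so $\dim V=k+1$. Every $f=\sum_j a_ju_j\in V$ satisfies
\begin{align*}
\int_\Omega |\nabla f|^2=\sum_{j=0}^k \mu_j(\Omega)a_j^2\leq \mu_k(\Omega)\int_\Omega f^2 .
\end{align*}

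Next I impose the $k$ linear conditions
\begin{align*}
\int_{\Omega_i} f=0,\qquad i=1,\dots,k,
\end{align*}
on $f\in V$. There are $k$ conditions on a $(k+1)$-dimensional space, so there is a nonzero $f\in V$ satisfying all of them. For each $i$, the restriction $f|_{\Omega_i}$ belongs to $H^1(\Omega_i)$ and has mean zero on $\Omega_i$. By the variational characterization of the first nonzero Neumann eigenvalue,
\begin{align*}
\mu_1(\Omega_i)=\inf\Big\{\frac{\int_{\Omega_i}|\nabla g|^2}{\int_{\Omega_i}g^2}\ :\ 0\neq g\in H^1(\Omega_i),\ \int_{\Omega_i}g=0\Big\},
\end{align*}
we get $\int_{\Omega_i}|\nabla f|^2\geq \mu_1(\Omega_i)\int_{\Omega_i}f^2$.

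Since the pieces cover $\Omega$ and overlap only in measure zero, both $\int_\Omega|\nabla f|^2$ and $\int_\Omega f^2$ split as sums over $i$. Summing the local inequalities gives
\begin{align*}
\mu_k(\Omega)\int_\Omega f^2\geq \int_\Omega|\nabla f|^2=\sum_i\int_{\Omega_i}|\nabla f|^2\geq \min_i\mu_1(\Omega_i)\sum_i\int_{\Omega_i}f^2=\min_i\mu_1(\Omega_i)\int_\Omega f^2 .
\end{align*}
Dividing by $\int_\Omega f^2>0$ gives the claim.

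The only delicate point, and the one I expect to be the main obstacle, is justifying the variational characterization of $\mu_1(\Omega_i)$ on each part. This is needed for whatever regularity the subdomains have; for the convex pieces used later they are Lipschitz, so the Rayleigh-quotient description over mean-zero $H^1(\Omega_i)$ functions is valid. If some $\Omega_i$ is disconnected, then $\mu_1(\Omega_i)=0$ and the inequality is trivial. Everything else is linear algebra.
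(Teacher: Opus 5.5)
Your proof is correct, and the indexing is right: with $\mu_0=0$, the span of $u_0,\dots,u_k$ has dimension $k+1$, one more than the number of mean-zero constraints. The paper gives no proof of its own and simply cites \cite[Theorem 8.2.1]{B}; your min-max dimension count is the standard argument for this result.
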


\section{Proof of the Main Theorem}
In this section we prove the main theorem. 
The following two lemmas are keys to prove the theorem. 
\begin{lem}\label{Mlem1}There exists a universal numeric constant $c>0$ satisfying the following. For any orthotope $\Omega$ in $\mathbb{R}^n$ and for any $k\geq l$ there exists a convex partition $\{\Omega_i\}_{i=1}^{l'}$ of $\Omega$ such that $l'\leq l$ and 
    \begin{align*}
        \diam \Omega_i \leq cn^{\frac{3}{2}}\frac{k}{l\sqrt{\mu_k(\Omega)}}.
    \end{align*}
\end{lem}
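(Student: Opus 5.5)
The plan is to make the partition a grid of congruent sub-boxes, and to control the number of boxes with the explicit Neumann spectrum of the orthotope. Since eigenvalues, diameters and the statement are invariant under rigid motions, we may take $\Omega=[0,L_1]\times\cdots\times[0,L_n]$. By separation of variables its Neumann eigenvalues are exactly the numbers $\pi^2\sum_{j}(m_j/L_j)^2$ with $m=(m_1,\dots,m_n)\in\mathbb{Z}_{\ge 0}^n$, counted with multiplicity. Put $R=\sqrt{\mu_k(\Omega)}/\pi$.

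First I extract a lattice-point bound from $\mu_k$. The eigenvalues strictly smaller than $\mu_k(\Omega)$ are among $\mu_0,\dots,\mu_{k-1}$, so
\begin{align*}
\#\Big\{m\in\mathbb{Z}_{\ge0}^n : \sum_j (m_j/L_j)^2<R^2\Big\}\le k .
\end{align*}
This set contains the box of all $m$ with $0\le m_j<RL_j/\sqrt{n}$ for every $j$. The number of admissible integers in coordinate $j$ is at least $\max(1,RL_j/\sqrt n)$. Hence
\begin{align*}
\prod_{j=1}^n \max\Big(1,\frac{RL_j}{\sqrt n}\Big)\le k .
\end{align*}
This is the only spectral input needed.

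Next I fix the mesh $d=C n k/(lR)$, with $C$ a numeric constant to be chosen. Cut the $j$-th side into $p_j=\lceil L_j/d\rceil$ equal intervals, and let the $\Omega_i$ be the resulting product boxes. These are convex, their number is $l'=\prod_j p_j$, and each has diameter at most $\sqrt n\, d = C n^{3/2}k/(lR)=C\pi n^{3/2}k/(l\sqrt{\mu_k(\Omega)})$, which is the required bound.

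It remains to show $l'\le l$, which I expect to be the only real point. Let $J=\{j: L_j>d\}$. If $J=\emptyset$ then $l'=1\le l$. Otherwise $p_j=1$ for $j\notin J$, and $p_j\le 2L_j/d$ for $j\in J$. For $j\in J$ we also have
\begin{align*}
\frac{RL_j}{\sqrt n}>\frac{Rd}{\sqrt n}=\frac{C\sqrt n\,k}{l}\ge 1,
\end{align*}
so the displayed bound gives $\prod_{j\in J} RL_j/\sqrt n\le k$. Therefore
\begin{align*}
l'\le \prod_{j\in J}\frac{2L_j}{d}=\prod_{j\in J}\frac{RL_j}{\sqrt n}\cdot\prod_{j\in J}\frac{2\sqrt n}{Rd}\le k\Big(\frac{2l}{C\sqrt n\,k}\Big)^{|J|}.
\end{align*}
With $C=2$ the bracket is at most $l/k\le 1$, since $k\ge l$. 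Using $|J|\ge1$, this yields $l'\le k\cdot (l/k)=l$.

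The delicate step is this interplay: a single lower bound on $\mu_k$ must bound simultaneously the long sides, which force many cuts, and the product of those cuts. The lattice box inside the ellipsoid handles both at once, at the cost of the factor $\sqrt n$ in the mesh. That factor, together with the $\sqrt n$ from the diameter of a box, produces the $n^{3/2}$ in the statement.
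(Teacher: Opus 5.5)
Your proof is correct, and it takes a genuinely different route from the paper's.

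The paper argues by induction on $n$. With $r=Cnk/(l\sqrt{\mu_k(\Omega)})$, it splits into two cases:
\begin{itemize}
\item If the shortest half-side satisfies $a_1\le 2r$, it drops that coordinate and applies the $(n-1)$-dimensional statement to the face $\Omega'$, using $\mu_k(\Omega)\le\mu_k(\Omega')$. It then multiplies the resulting cells by $[-a_1,a_1]$.
\item If $a_1>2r$, it takes a maximal $2r$-separated set in the inner parallel body $\{d(x,\partial\Omega)\ge r\}$. It bounds the number of points by packing disjoint $r$-balls against Kr\"oger's upper bound $\mu_k(\Omega)\lesssim (k/\omega_n\vol\Omega)^{2/n}$, and it uses the Voronoi cells, which have diameter at most $2(2+\sqrt n)r$.
\end{itemize}

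You instead use a single uniform grid and count its cells directly against lattice points of the explicit spectrum. The factors $\max(1,RL_j/\sqrt n)$ treat short sides (no cuts, factor $1$) and long sides together. This does exactly what the paper's case split and induction do.

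The paper's packing step needs only a volume bound valid for arbitrary Lipschitz domains. But it then needs the asymptotics of $\omega_n$ to see that $C$ is universal, and it still uses the product structure for $\mu_k(\Omega)\le\mu_k(\Omega')$. Your argument is shorter, fully elementary, and gives the explicit constant $c=2\pi$.

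It is also stronger than you state: the second factor $\sqrt n$ in your mesh is never used. Take $d=2\sqrt n\,k/(lR)$ instead. You still get $Rd/\sqrt n=2k/l\ge 1$ and $2\sqrt n/(Rd)=l/k$, so $l'\le k(l/k)^{|J|}\le l$, while $\diam\Omega_i\le\sqrt n\,d=2\pi n\,k/(l\sqrt{\mu_k(\Omega)})$.

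So your closing remark slightly misattributes the exponent. The lattice box costs one $\sqrt n$ and the box diameter another; the remaining half-power is slack. Fed into the proof of the main theorem, this would lower $n^{10}$ to $n^{9}$.
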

\begin{lem}\label{Mlem2}There exists a universal numeric constant $c>0$ satisfying the following. For any orthotope $\Omega$ in $\mathbb{R}^n$ and for any $k\geq l$ there exists a convex partition $\{\Omega_i\}_{i=1}^{k'}$ of $\Omega$ such that $k'\leq k$ and 
    \begin{align*}
        \diam \Omega_i \leq cn^{\frac{3}{2}}\Big(\frac{l}{k}\Big)^{\frac{1}{n}}\frac{1}{\sqrt{\mu_l(\Omega)}}.
    \end{align*}
\end{lem}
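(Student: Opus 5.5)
I will argue directly with the explicit Neumann spectrum of an orthotope and a grid partition. Neither John's theorem nor domain monotonicity is needed, because $\Omega$ is already a box. After a rigid motion, write $\Omega=\prod_{i=1}^n[0,a_i]$. Its Neumann eigenvalues are the numbers $\pi^2\sum_i (m_i/a_i)^2$ with $m=(m_1,\dots,m_n)\in\mathbb{Z}_{\ge 0}^n$, counted with multiplicity. Put $r:=\sqrt{\mu_l(\Omega)}/\pi$. I assume $l\ge 1$; this is the case in which the statement is meaningful, since it involves $l/k$.

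\textbf{Step 1 (counting).} Since $\mu_l(\Omega)$ is the $(l+1)$-st eigenvalue counting $\mu_0=0$, at most $l$ lattice points $m$ satisfy $\sum_i (m_i/a_i)^2<r^2$. Consider the lattice points with $0\le m_i<r a_i/\sqrt{n}$ for every $i$. Each of them has $\sum_i (m_i/a_i)^2<r^2$. There are exactly $\prod_i \lceil r a_i/\sqrt n\rceil$ of them, so
\[
\prod_{i=1}^n \Big\lceil \frac{r a_i}{\sqrt n}\Big\rceil\le l .
\]

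\textbf{Step 2 (grid partition).} For a parameter $t>0$, cut the $i$-th side into $\lceil t a_i\rceil$ equal intervals. This gives a convex partition of $\Omega$ into $\prod_i\lceil t a_i\rceil$ boxes, each with all sides at most $1/t$, hence diameter at most $\sqrt n/t$. I split into two cases.

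\emph{Case $k< 2^n l$.} Take $t=r/\sqrt n$. By Step 1 the number of pieces is at most $l\le k$. The diameters are at most
\[
\frac{n}{r}=\frac{\pi n}{\sqrt{\mu_l(\Omega)}}\le \frac{2\pi n\,(l/k)^{1/n}}{\sqrt{\mu_l(\Omega)}},
\]
where the last inequality uses $(l/k)^{1/n}>1/2$.

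\emph{Case $k\ge 2^n l$.} Set $\lambda=\frac12(k/l)^{1/n}\ge 1$ and $t=\lambda r/\sqrt n$. For $y>0$ we have $\lceil y\rceil\ge1$, so
\[
\lceil \lambda y\rceil\le \lambda y+1\le \lambda\lceil y\rceil+1\le 2\lambda\lceil y\rceil .
\]
Applying this with $y=ra_i/\sqrt n$ and using Step 1, the number of pieces is at most $(2\lambda)^n l=k$. Each piece has diameter at most
\[
\frac{\sqrt n}{t}=\frac{2n\,(l/k)^{1/n}}{r}=\frac{2\pi n\,(l/k)^{1/n}}{\sqrt{\mu_l(\Omega)}}.
\]

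\textbf{Conclusion.} In both cases the bound is $2\pi n\,(l/k)^{1/n}/\sqrt{\mu_l(\Omega)}$. Since $n\le n^{3/2}$, this gives the lemma with $c=2\pi$, and in fact a sharper power of $n$.

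The main obstacle is Step 1, namely bounding the number of grid cells by $l$. One must use the \emph{strict} count of eigenvalues below $\mu_l$, which is at most $l$, and inscribe a lattice box into the ellipsoid $\{\sum_i x_i^2/a_i^2<r^2\}$. This inscription costs the factor $\sqrt n$ in the side lengths, and that factor is the source of the dimensional constant. The rest is the elementary ceiling estimate in Step 2, together with the separate treatment of $\lambda<1$ in the first case.
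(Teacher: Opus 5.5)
Your proof is correct, and it takes a genuinely different route from the paper's.

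The paper inducts on $n$ at the scale $\rho=Cn(l/k)^{1/n}/\sqrt{\mu_l(\Omega)}$:
\begin{itemize}
\item A thin box ($a_1\le 2\rho$) is handled by multiplying by $[-a_1,a_1]$ a partition of the $(n-1)$-dimensional face supplied by the induction hypothesis.
\item A fat box is cut into the Voronoi cells of a maximal $2\rho$-separated set in the inner parallel body $\{x\in\Omega\mid d(x,\partial\Omega)\ge\rho\}$. The number of cells is bounded by $k$ by packing the disjoint balls $B(x_i,\rho)$ into $\Omega$ and invoking Kr\"oger's bound $\mu_l(\Omega)\lesssim (l/(\omega_n\vol\Omega))^{2/n}$.
\end{itemize}

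You instead use the explicit spectrum of the box in full. Your lattice box $\{0\le m_i<ra_i/\sqrt n\}$ lies inside the ellipsoid $\{\sum_i m_i^2/a_i^2<r^2\}$, which turns the strict eigenvalue count into $\prod_i\lceil ra_i/\sqrt n\rceil\le l$. A refined grid together with $\lceil\lambda y\rceil\le 2\lambda\lceil y\rceil$ then does the rest, with no induction, no Kr\"oger inequality, and no net or Voronoi construction.

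Your argument is shorter and sharper: it gives $2\pi n$ in place of $cn^{3/2}$. The paper loses a factor $\sqrt n$ because the corners of the box lie at distance $\sqrt n\,\rho$ from the inner parallel body. Feeding your bound into the proof of Theorem~\ref{MTHM} would improve $n^{10}$ to $n^{9}$ in the first inequality there.

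The paper's approach, in turn, is the direct higher-dimensional extension of the planar net-and-Voronoi construction. Its counting step relies only on volume and Kr\"oger's inequality, which hold for arbitrary domains, whereas yours depends on the exact lattice structure of the spectrum of a box.
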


\begin{proof}[Proof of Theorem \ref{MTHM}]By Proposition \ref{Hacher lem} there exists an orthotope $R$ such that after choosing the coordinate axes we have
\begin{align*}
    \frac{1}{\sqrt{n}}R\subseteq \Omega \subseteq n R.
\end{align*}
By Lemma \ref{Mlem1} there exists a convex partition $\{\Omega_i\}_{i=1}^{l'}$ of $R$ such that $l'\leq l$ and 
\begin{align*}
    \diam \Omega_i \leq cn^{\frac{3}{2}}\frac{k}{l\sqrt{\mu_k(R)}}\ \text{ for all }i.
\end{align*}
Thus Buser's theorem (Theorem \ref{Buserthm}) and the Payne-Weinberger inequality (Theorem \ref{Diamthm}) imply that
\begin{align*}
    \mu_l(R)\geq \mu_{l'}(R)\geq \min_i \mu_1(\Omega_i)\geq \min_i \frac{\pi^2}{(\diam \Omega_i)^2} \geq  \frac{l^2\mu_k(R)}{c^2n^3k^2}.
\end{align*}By virtue of Theorem \ref{domain monotonicity} we obtain
\begin{align*}
    \mu_l(\Omega)\geq \frac{\mu_l(nR)}{(92 n)^{2}}=\frac{\mu_l(R)}{(92)^{2}n^{4}}\geq \frac{l^2\mu_k(R)}{(92)^2c^2n^7k^2}= \ &\frac{l^2\mu_k(\frac{1}{\sqrt{n}}R)}{(92)^2c^2n^8k^2}\\ \geq \ &\frac{l^2\mu_k(\Omega)}{(92)^4c^2n^{10}k^2},
\end{align*}which implies the second inequality of Theorem \ref{MTHM}. In the same way the first inequality of the theorem follows utlizing Lemma \ref{Mlem2}. This completes the proof of the theorem.

\end{proof}
\begin{proof}[Proof of Lemma \ref{Mlem1}]
    Take a universal constant $C>0$ such that 
    \begin{align*}
         C\geq \Big((2\pi)^n\cdot \frac{n+2}{2} \cdot \frac{1}{(\omega_n)^2}\Big)^{\frac{1}{n}}\cdot \frac{1}{n}\sim \text{ const. }
    \end{align*}for any $n\geq 2$. Also take a universal constant $c>0$ such that
    \begin{align*}
        c\geq 4C \text{ and }c>\frac{2\sqrt{n}}{\sqrt{n}-1} \text{ for any }n\geq 2.
    \end{align*}

    For such $c$ we prove the lemma by induction on $n$. The case where $n=1$ is trivial. 

    Suppose that the claim of the lemma holds for any orthotope $\Omega$ in $\mathbb{R}^{n-1}$, that is, suppose that for such $\Omega$ we can always find a convex partition $\{\Omega_i\}_{i=1}^{l'}$ of $\Omega$ such that $l'\leq l$ and $\diam \Omega_i\leq c(n-1)^{\frac{3}{2}}\frac{k}{l\sqrt{\mu_k(\Omega)}}$.

    Let $\Omega$ be an orthotope in $\mathbb{R}^n$. After choosing the coordinate axes appropriately we may assume that $\Omega=[-a_1,a_1]\times [-a_2,a_2]\times \cdots \times [-a_n,a_n]$ and $a_1\leq a_2\leq \cdots \leq a_n$.

    Put $r:=\frac{Cnk}{l\sqrt{\mu_k(\Omega)}}$. We first consider the case where $a_1\leq 2r$. Set $\Omega':=[-a_2,a_2]\times [-a_3,a_3]\times \cdots \times [-a_n,a_n]$. By the assumption of the induction there exists a convex partition $\{\Omega_i'\}_{i=1}^{l'}$ of $\Omega'$ such that $l'\leq l$ and $\diam \Omega_i'\leq c(n-1)^{\frac{3}{2}}\frac{k}{l\sqrt{\mu_k(\Omega')}}$. Note that $\mu_k(\Omega)\leq \mu_k(\Omega')$ (This follows from the concrete expression of eigenvalues of an orthotope). Thus we get $\diam \Omega_i'\leq c(n-1)^{\frac{3}{2}}\frac{k}{l\sqrt{\mu_k(\Omega)}}$. Setting $\Omega_i:=[-a_1,a_1]\times \Omega_i'$ we see that $\{\Omega_i\}_{i=1}^{l'}$ covers $\Omega$. We also have 
    \begin{align*}
        \diam \Omega_i\leq \sqrt{4^2C^2n^2+c^2(n-1)^3}\frac{k}{l\sqrt{\mu_k(\Omega)}}\leq cn^{\frac{3}{2}}\frac{k}{l\sqrt{\mu_k(\Omega)}},
    \end{align*}where the second inequality follows from our choice of the constants $c$ and $C$. Therefore $\{\Omega_i\}_{i=1}^{l'}$ is a desired convex partition of $\Omega$.

    Let us consider the case where $a_1>2r$. In this case we put 
    \begin{align*}
        \widetilde{\Omega}:=\{x\in \Omega \mid d(x,\partial \Omega)\geq r\},
    \end{align*}where 
    \begin{align*}
        d(x,\partial \widetilde{\Omega}):=\inf \{|x-y| \mid y\in \partial \widetilde{\Omega}\}.
    \end{align*}
    Note that $\widetilde{\Omega}$ is a nonempty (noncollapsed) orthotope. Let $\{x_i\}_{i=1}^{l'}$ be a maximal $(2r)$-separated points in $\widetilde{\Omega}$. Since $\{B(x_i,r)\}_{i=1}^{l'}$ are disjoint balls in $\Omega$, we have
    \begin{align*}
       \frac{l'\omega_n(Cnk)^n}{l^n\mu_k(\Omega)^{\frac{n}{2}}}= l'\omega_n r^n =\sum_{i=1}^{l'} \vol B(x_i,r)\leq \vol \Omega.
    \end{align*}
    Applying Theorem \ref{Kr ineq2} to the above inequality we have
    \begin{align*}
        l'\leq (2\pi)^n\frac{n+2}{2}\cdot \frac{1}{(\omega_n)^2}\cdot \frac{l^{n-1}}{C^n n^nk^{n-1}}\cdot l\leq \frac{l^{n-1}}{k^{n-1}}\cdot l \leq l,
    \end{align*}where the second inequality follows from our choice of $C$ and $k\geq l$ implies the last inequality.

    The maximality of the points $\{x_i\}_{i=1}^{l'}$ gives $\widetilde{\Omega}\subseteq \bigcup_{i=1}^{l'}B(x_i,2r)$. Also observe that the $\sqrt{n}r$-neighbourhood of $\widetilde{\Omega}$ covers $\Omega$ (note that the $r$-neighbourhood of $\widetilde{\Omega}$ does not cover $\Omega$). We thereby have $\Omega\subseteq \bigcup_{i=1}^{l'}B(x_i,(2+\sqrt{n})r)$. Therefore if we introduce the \emph{Voronoi cells}
    \begin{align*}
        \Omega_i:=\{x\in \Omega \mid |x-x_i|\leq |x-x_j| \text{ for all }j\neq i\},
    \end{align*}then $\{\Omega_i\}_{i=1}^{l'}$ is a convex partition of $\Omega$. Since $\Omega_i\subseteq B(x_i,(2+\sqrt{n})r)$ our choice of $c$ implies
    \begin{align*}
        \diam \Omega_i\leq 2(2+\sqrt{n})r\leq c n^{\frac{3}{2}}\frac{k}{l\sqrt{\mu_k(\Omega)}}.
    \end{align*}Therefore we obtain the desired convex partition $\{\Omega_i\}_{i=1}^{l'}$ of $\Omega$. This completes the proof.
\end{proof}
Although the same method in the above proof applies to the proof of Lemma \ref{Mlem2}, we give the proof for the completeness of this paper. 
\begin{proof}[Proof of Lemma \ref{Mlem2}]
We take constants $c,C>0$ as in the proof of Lemma \ref{Mlem1}, that is, $C>0$ is a constant such that 
    \begin{align*}
         C\geq \Big((2\pi)^n\cdot \frac{n+2}{2} \cdot \frac{1}{(\omega_n)^2}\Big)^{\frac{1}{n}}\cdot \frac{1}{n}
    \end{align*}for any $n\geq 2$ and $c>0$ is a constant such that
    \begin{align*}
        c\geq 4C \text{ and }c>\frac{2\sqrt{n}}{\sqrt{n}-1} \text{ for any }n\geq 2.
    \end{align*}
    
For the constant $c>0$ we prove the lemma by induction on $n$. The case where $n=1$ is trivial.

Suppose that the claim of the lemma holds for any orthotope $\Omega$ in $\mathbb{R}^{n-1}$, that is, suppose that for such $\Omega$ we can always find a convex partition $\{\Omega_i\}_{i=1}^{k'}$ of $\Omega$ such that $k'\leq k$ and $\diam \Omega_i\leq c(n-1)^{\frac{3}{2}}\big(\frac{l}{k}\big)^{\frac{1}{n-1}}\frac{1}{\sqrt{\mu_l(\Omega)}}$.

Let $\Omega$ be an orthotope in $\mathbb{R}^n$. We may assume that $\Omega=[-a_1,a_1]\times [-a_2,a_2]\times \cdots \times [-a_n,a_n]$ and $a_1\leq a_2\leq \cdots \leq a_n$. 

Put $r:=Cn \big(\frac{l}{k}\big)^{\frac{1}{n}}\frac{1}{\sqrt{\mu_l(\Omega)}}$. The proof of the existence of a desired convex partition of $\Omega$ is divided into two cases. The first case is the case where $a_1\leq 2r$. Set $\Omega':=[-a_2,a_2]\times [-a_3,a_3]\times \cdots \times [-a_n,a_n]$. By the assumption of the induction there exists a convex partition $\{\Omega_i'\}_{i=1}^{k'}$ of $\Omega'$ such that $k'\leq k$ and $\diam \Omega_i'\leq c(n-1)^{\frac{3}{2}}\big(\frac{l}{k}\big)^{\frac{1}{n-1}}\frac{1}{\sqrt{\mu_l(\Omega')}}$. Since $\mu_l(\Omega')\geq \mu_l(\Omega)$ and $k\geq l$ we get $\diam \Omega_i'\leq c(n-1)^{\frac{3}{2}}\big(\frac{l}{k}\big)^{\frac{1}{n}}\frac{1}{\sqrt{\mu_l(\Omega)}}$. Setting $\Omega_i:=[-a_1,a_1]\times \Omega_i'$ we have 
    \begin{align*}
        \diam \Omega_i\leq \sqrt{4^2C^2n^2+c^2(n-1)^3}\Big(\frac{l}{k}\Big)^{\frac{1}{n}}\frac{1}{\sqrt{\mu_l(\Omega)}}\leq cn^{\frac{3}{2}}\Big(\frac{l}{k}\Big)^{\frac{1}{n}}\frac{1}{\sqrt{\mu_k(\Omega)}}.
    \end{align*}Hence $\{\Omega_i\}_{i=1}^{k'}$ is a desired convex partition of $\Omega$.

The second case is the case where $a_1>2r$. In this case as in the proof of Lemma \ref{Mlem1} we consider the subdomain
\begin{align*}
    \widetilde{\Omega}:=\{x\in \Omega \mid d(x,\partial\Omega)\geq r\}
\end{align*}and take a maximal $2r$-separated points $\{x_i\}_{i=1}^{k'}$ in $\widetilde{\Omega}$. If we show $k'\leq k$ then as in the proof of Lemma \ref{Mlem1} we get the desired convex partition $\{\Omega_i\}_{i=1}^{k'}$. Since $\{B(x_i,r)\}_{i=1}^{k'}$ are disjoint balls in $\Omega$, we have
\begin{align*}
k'\omega_n r^n=\sum_{i=1}^{k'}\vol B(x_i,r)\leq \vol \Omega.   
\end{align*}Applying Kr\"{o}ger's inequality (Theorem \ref{Kr ineq2}) to the above inequality we obtain
\begin{align*}
    k'\leq \frac{\vol \Omega}{w_n R^n}\leq\frac{(2\pi)^n k}{C^n n^n \omega_n^2 }\cdot \frac{n+2}{2}\leq k,
\end{align*}where the last inequality follows from our choice of $C$. This completes the proof of the lemma. 
\end{proof}

\emph{Acknowledgement.}This work was supported by JSPS KAKENHI Grant Number JP24K06731.

%%%%%%%%%%%%%%%%%%%%%%%%%%%%

\end{document}